\documentclass{amsart}
\usepackage{a4wide,amsmath,amssymb,amsthm,tikz-cd}
\usepackage[all]{xy}
\usepackage{amsrefs}
\newtheorem{theorem}{Theorem}[section]
\newtheorem{lemma}[theorem]{Lemma}
\newtheorem{corollary}[theorem]{Corollary}
\newtheorem{example}[theorem]{Example}

\newtheorem*{theorem*}{Main Theorem}

\newcommand{\KK}{\mathbb{K}}
\newcommand{\cP}{\mathcal{P}}

\title{Superdecomposable Complete Path Algebras}
\author{Can Hat\.{i}po\u{g}lu}
\address{College of Engineering and Technology, American University of the Middle East, Kuwait}
\email{osman.hatipoglu@aum.edu.kw}
\author{Christian Lomp}
\address{Department of Mathematics of the Faculty of Science and Center of Mathematics, University of Porto, Rua Campo Alegre, 687, 4169-007 Porto, Portugal}
\email{clomp@fc.up.pt}

\keywords{Superdecomposable modules, hollow modules, path algebras, coalgebras}
\thanks{The second author was partially supported by Centro de Matemática da Universidade do Porto, member of LASI, which is financed by
national funds through FCT – Fundação para a Ciência e a Technology, I.P., under the project with reference UID/00144/2025, doi: https://doi.org/10.54499/UID/00144/2025.}
\begin{document}

\begin{abstract}
Generalizing the setting of the second author's work in \cite{Lomp2007}, we characterize quivers $Q$ for which the dual algebra $R$ of the path coalgebra of $Q$, equivalently the complete path algebra, has no nonzero uniform right ideals: this happens exactly when every vertex of $Q$ can reach a vertex emitting at least two distinct arrows. Equivalently, the injective hull $E(R)$ of the right regular module is superdecomposable. These conditions are also equivalent to the path coalgebra having no nonzero hollow factor modules as a left module over its dual algebra.
\end{abstract}
\maketitle
\section*{Introduction}
In \cite{Lomp2007}, the second author constructed an indecomposable module without nonzero hollow factor modules, thereby showing that hollow-lifting modules need not be lifting. The existence of such modules had been left open in \cite{Orhan-Keskin-Tribak-2007}. The construction starts with a field $\KK$ and a finite alphabet $\Sigma$ containing at least two letters. Then we let $\Sigma^*$ denote the free monoid of words in $\Sigma$, including the empty word. The space $R$ of all functions $\Sigma^*\to\KK$ becomes a $\KK$-algebra under pointwise addition and the convolution product for $f,g \in R$:
\[(fg)(w)=\sum_{uv=w}f(u)g(v), \qquad \forall w\in \Sigma^*\]
Let $M$ be the $\KK$-vector space with basis $\Sigma^*$. For $f\in R$ and $w\in \Sigma^*$, the action
$f\cdot w=\sum_{uv=w}f(v)u$, 
extended $\KK$-linearly, makes $M$ a left $R$-module. As shown in \cite[\S~2]{Lomp2007}, this module is indecomposable and has no nonzero hollow factor modules.

A module-theoretic explanation of this example is provided by \cite[Proposition~3.1]{Lomp2007}: if a semi-injective coretractable module has a nonzero hollow factor module, then its endomorphism ring contains a nonzero uniform right ideal. Here and throughout, endomorphisms are written as acting on the right, with multiplication defined by $m(fg)=(mf)g$. 

The module $M$ above is the path coalgebra of the quiver with a single vertex and one loop for each letter of $\Sigma$, and $R$ is its dual algebra. 
More generally, every coalgebra $C$ over $\KK$ is a self-injective self-cogenerator (hence semi-injective and coretractable) as a left module over its dual algebra $C^*$. With our convention for endomorphism rings, there is a $\KK$-algebra isomorphism
\[C^*\cong\operatorname{End}_{C^*}({}_{C^*}C).\]
Consequently, if $C^*$ contains no nonzero uniform right ideals, then $C$, regarded as a left $C^*$-module, has no nonzero hollow factor modules.

Motivated by this observation, we study the existence of uniform right ideals in the dual algebras of path coalgebras. Our main result characterizes the absence of nonzero hollow
factor modules for path coalgebras in terms of the branching structure of the underlying quiver. We call a vertex \emph{branching} if it emits at least two distinct arrows. A vertex has access to another vertex if there is a directed path from the former to the latter, allowing paths of length zero.

\begin{theorem*}
Let $\KK$ be a field, let $Q$ be a nonempty quiver, and let
$C=\KK Q$ be its path coalgebra with dual algebra $R=C^*$.
The following conditions are equivalent:

\begin{enumerate}
\item Every vertex of $Q$ has access to a branching vertex.
\item $R$ has no nonzero uniform right ideals.
\item The injective hull $E(R_R)$ is superdecomposable.
\item $C$ has no nonzero hollow factor modules as a left
      $R$-module under the canonical action.
\end{enumerate}
\end{theorem*}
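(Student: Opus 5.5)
We prove the cycle of implications (1)$\Rightarrow$(2)$\Rightarrow$(4) together with (2)$\Leftrightarrow$(3) and (4)$\Rightarrow$(1). We first fix conventions. $R=C^*$ is identified with the space of all functions on the set $\cP$ of paths of $Q$, with convolution $(fg)(p)=\sum_{p=uv}f(u)g(v)$, where $p=uv$ means that $u$ is traversed first. The action on $C$ is $f\cdot p=\sum_{p=uv}f(v)u$, as in the introduction. For a path $u$, let $e_u$ be the dual functional. Then $(e_u R)$ consists of functions supported on paths beginning with $u$, and $e_u R\cong e_{t(u)}R$ via $g\mapsto (q\mapsto g(uq))$ on functions.

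\textbf{(2)$\Rightarrow$(4).} The introduction already gives this: $C$ is semi-injective and coretractable over $R$, and $\operatorname{End}({}_RC)\cong R$. So \cite[Proposition~3.1]{Lomp2007} applies.

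\textbf{(2)$\Leftrightarrow$(3).} We use the standard fact that $E(R_R)$ is superdecomposable iff it has no nonzero uniform (equivalently, indecomposable injective) submodule. A uniform submodule $U\le E(R_R)$ meets $R$ essentially, so $U\cap R$ is a nonzero uniform right ideal. Conversely, a uniform right ideal $I$ has hull $E(I)$, an indecomposable direct summand of $E(R)$. This gives the equivalence.

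\textbf{(1)$\Rightarrow$(2).} Let $0\ne f\in R$ and pick a path $p$ of minimal length in the support of $f$. Set $v=t(p)$. By (1) there is a path $q$ from $v$ to a branching vertex $w$, and $w$ emits two distinct arrows $\alpha\neq\beta$. Put $g_1=e_{q\alpha}$ and $g_2=e_{q\beta}$, viewed as functions concentrated at single paths. Then $fg_1(p'q\alpha)$ is nonzero only through decompositions with right factor $q\alpha$, so $fg_1$ has support in $\{r q\alpha: r\in\operatorname{supp} f\}$. Similarly $fg_2$ has support in $\{rq\beta\}$, and both are nonzero because $f(p)\ne0$.

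The main point is to show $fg_1R\cap fg_2R=0$. Every element of $fg_1R$ is supported on paths of the form $rq\alpha s$ with $r\in\operatorname{supp}f$. Using minimality of $|p|$ alone is not enough, since different $r$ of different lengths could make $rq\alpha s=r'q\beta s'$. The fix is to replace $f$ by a better element of $fR$. Take $h=f e_{v}$-style truncation, or more robustly, choose $p$ and note that $fR\ni f\cdot\chi$, where $\chi$ runs over elements supported on long paths.

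The cleaner route is to observe that nonuniformity of every nonzero right ideal follows once every $fR$ contains an element $h$ whose support consists of paths of a single fixed length $n$ ending at the same vertex. To get such an $h$, multiply $f$ on the right by $e_{z}$ for a suitably long path $z$ and take the component of lowest degree. The grading by path length is only a filtration on $R$, so lowest-degree components are not automatically in $fR$. I expect this step to be the main obstacle. I would handle it by working with the leading term: for $h\in fR$ with lowest-length support $L$, the right ideals generated by $hg_1$ and $hg_2$ have elements whose lowest-length parts are of the forms $\ell q\alpha\cdots$ and $\ell' q\beta\cdots$ with $|\ell|,|\ell'|\ge|p|$. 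An element $x$ in the intersection must then have lowest-length support simultaneously passing through $\alpha$ and through $\beta$ at position $|p|+|q|$. Comparing lowest-degree parts via the valuation $\nu(x)=\min\{|r|:x(r)\neq0\}$ should force $x=0$. So $fR$ is not uniform.

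\textbf{(4)$\Rightarrow$(1).} We argue by contraposition. Suppose some vertex $v$ has no access to a branching vertex. Then the vertices reachable from $v$ form a subquiver in which each vertex emits at most one arrow. So the paths starting at $v$ form a single chain $v=p_0,p_1,p_2,\dots$, finite or eventually cyclic, in which each $p_{i+1}$ extends $p_i$ by one arrow. Consider the subcoalgebra (subcomodule) $N$ spanned by paths ending in this chain region, or more directly $e_vR$, which is the chain of functions on $\{p_i\}$. Its right submodules are the sets of functions vanishing on $p_0,\dots,p_{k-1}$, hence form a chain, so $e_vR$ is uniform and contradicts (2). To get (4) directly, dualize: the left $R$-submodule of $C$ spanned by paths $u$ with $u$ ending in this region gives, via the duality ${}_RC$ versus $R$, a factor module of $C$ whose submodule lattice is the chain dual to that of $e_vR$, hence a nonzero hollow factor. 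I would set this up concretely as $C\to C/\ker(\pi)$, where $\pi:C\to \KK\{p_i\}$ sends $u$ to $\sum f(\ldots)$, and check that its submodules form a chain. This also yields (2)$\Rightarrow$(1) along the way.
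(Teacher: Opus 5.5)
Your proposal has two genuine gaps. The first is at the core of (1)$\Rightarrow$(2). Your set-up coincides with the paper's: $fe_{q\alpha}R$ and $fe_{q\beta}R$ are its $f\pi_q\pi_aR$ and $f\pi_q\pi_bR$. But you never show that these right ideals meet trivially. ``Should force $x=0$'' is not an argument. The bound $|\ell|,|\ell'|\ge|p|$ you state does not pin $\alpha$ or $\beta$ to a fixed position, precisely because $\ell q\alpha\cdots=\ell' q\beta\cdots$ can occur with $|\ell|\ne|\ell'|$.

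Your leading-term idea is the right one, and no passage to a homogeneous element of $fR$ is needed. Minimality of $|p|$ suffices once you also take shortest support paths of the cofactors. Suppose $0\ne x=fe_{q\alpha}g=fe_{q\beta}h$, and let $r$ be a shortest path with $s(r)=t(\alpha)$ and $g(r)\ne0$. Any decomposition $\rho=uq\alpha w$ with $f(u)g(w)\ne0$ has $|u|\ge|p|$ and $|w|\ge|r|$. Hence $\nu(x)=|p|+|q|+1+|r|$, attained at $pq\alpha r$, where $x(pq\alpha r)=f(p)g(r)\ne0$. Moreover, in every path of length $\nu(x)$ in the support of $x$, the $(|p|+|q|+1)$-st arrow is $\alpha$. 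The same argument with $h$ makes that arrow $\beta$, a contradiction. This is exactly the content of the paper's branching-vertex lemma. There it is phrased as assuming $|r|\le|s|$ and evaluating both products at the single path $pq\alpha r$.

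The second gap is in getting back from (4). What your last paragraph actually shows is (2)$\Rightarrow$(1). Even that relies on the unproved claim that every nonzero right ideal inside $e_vR$ is some $e_{p_n}R$. That claim needs the recursive construction of $g$ with $fg=e_{p_n}$, which the paper carries out in its lemma on finite and totally ordered path sets. With Lomp's Proposition 3.1 you then have (1)$\Leftrightarrow$(2)$\Leftrightarrow$(3)$\Rightarrow$(4), and nothing leads from (4) back.

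Your dualization is only gestured at, and as described it fails. The span of paths ending in the non-branching region is not an $R$-submodule, since $f\cdot p=\sum_{p=uv}f(v)u$ strips suffixes and so preserves starting vertices, not terminal ones. Either of two fixes works.

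\emph{Direct construction.} $C=\bigoplus_{e\in Q_0}\KK S_e$ as left $R$-modules. If $S_v=\{p_0<p_1<\cdots\}$ is a chain, acting with $e_u$ for the suffixes $u$ of $p_m$ shows that any element with longest term $p_m$ generates $\KK\{p_0,\dots,p_m\}$. So $\KK S_v$ is a nonzero uniserial, hence hollow, direct summand of $C$.

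\emph{Converse of Lomp's result}, as in the paper's Corollary 1.4. Semi-injectivity gives $\operatorname{Ann}(\ker f)=fR$. If $A+B=C$ with $A,B$ proper and containing $\ker f$, coretractability makes $\operatorname{Ann}(A)$ and $\operatorname{Ann}(B)$ nonzero right ideals inside $fR$ with intersection $\operatorname{Ann}(A+B)=0$. So a uniform $fR$ forces $C/\ker f$ to be hollow, which gives (4)$\Rightarrow$(2).
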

The theorem is proved in Section~2.
\section{Hollow Factors, Uniform Submodules, and Superdecomposable Modules}\label{sec:superdecomposable modules}
A nonzero module $M$ is called \emph{superdecomposable} if it does not have nonzero indecomposable direct summands. A nonzero module is \emph{uniform} if any two nonzero submodules have a nonzero intersection and it is called \emph{hollow} if it cannot be written as the sum of two proper submodules. For a nonzero module $M$, its injective hull $E(M)$ is superdecomposable if and only if $M$ contains no nonzero uniform submodules. Indeed, if $D$ is an indecomposable direct summand of $E(M)$, then it is injective and hence uniform. Since $M$ is essential in $E(M)$ and since nonzero submodules of uniform modules are uniform, $D\cap M$ is a nonzero uniform submodule of $M$. Conversely, if $U$ is a nonzero uniform submodule of $E(M)$, then $E(U)$ is uniform. Since an injective module is uniform if and only if it is indecomposable, it follows that $E(U)$ is (isomorphic to) an indecomposable direct summand of $E(M)$ and $E(M)$ is not superdecomposable.

\begin{example}
Let $R = \KK^{\mathbb{N}}/I$ be the quotient of the ring of all sequences in $\KK$ by the ideal $I = \KK^{(\mathbb{N})}$ of all sequences with finitely many nonzero entries. Given any nonzero element $a = (a_n) + I$ with $\operatorname{supp}((a_n)) = \{n \in \mathbb{N} : a_n \neq 0 \}$, we can write $\operatorname{supp}((a_n)) = X \cup Y$ into a disjoint union of infinite sets. Let $e_X$ be the idempotent of $R$ given by $(e_X)_n = 1$ if and only if $n \in X$ and zero otherwise. Then $ae_X R$ and $ae_Y R$ are both nonzero such that $ae_X R \cap ae_Y R = 0$. Thus, $E(R_R)$ is superdecomposable.
\end{example}
\begin{example}\label{powerseries}
Let $R = \KK \langle x,y\rangle$ be the free algebra in two variables over a field $\KK$. If $I$ is a nonzero right ideal of $R$ and $f$ is a nonzero element of $I$, then $fxR$ and $fyR$ are two nonzero right ideals contained in $I$ with zero intersection. Hence, no nonzero right ideal of $R$ is uniform and its injective hull $E(R_R)$ is superdecomposable. More generally, the same argument holds for 
$D=\KK\langle\langle x,y\rangle\rangle$, the free power series ring in $x$ and $y$. 
\end{example}
We record here for convenience the aforementioned result in \cite{Lomp2007}. Recall that a left $R$-module $M$ is \emph{semi-injective} if for any factor module $N$ of $M$, monomorphism $k:N\to M$ and homomorphism $g:N\to M$, there exists $h:M\to M$, such that $kh = g$. Equivalently, a left $R$-module $M$ with endomorphism ring $S = \operatorname{End}_R(M)$ is semi-injective if $\operatorname{Ann}_S (\ker f) = fS$ for every $f \in S$, where $\operatorname{Ann}_S (N) = \{f \in S : Nf = 0\}$. Also, $M$ is called \emph{coretractable} if $\operatorname{Hom}_R (M/N, M) \neq 0$ for every proper submodule $N$ of $M$. 
\begin{lemma}[{\cite[Proposition 3.1]{Lomp2007}}]
Let $M$ be a nonzero semi-injective coretractable module with endomorphism ring $S=\mathrm{End}_R (M)$. If $N<M$ and $M/N$  is hollow, then $\operatorname{Ann}_S(N)$ is a nonzero uniform right ideal of $S$.
\end{lemma}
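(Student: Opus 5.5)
We prove the lemma directly from the characterization of semi-injectivity, $\operatorname{Ann}_S(\ker f)=fS$ for every $f\in S$, together with coretractability.

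\emph{Nonzeroness.} Since $N$ is a proper submodule of $M$, coretractability gives a nonzero homomorphism $M/N\to M$. Composing it with the projection $\pi:M\to M/N$ gives a nonzero endomorphism $f\in S$ with $Nf=0$. Hence $\operatorname{Ann}_S(N)\neq 0$. The set $\operatorname{Ann}_S(N)$ is a right ideal of $S$: if $Nf=0$, then $N(fg)=(Nf)g=0$ for every $g\in S$.

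\emph{Uniformity.} It suffices to take nonzero $f,g\in\operatorname{Ann}_S(N)$ and show $fS\cap gS\neq 0$, since every nonzero right ideal inside $\operatorname{Ann}_S(N)$ contains such a cyclic right ideal. Because $N\subseteq\ker f$ and $N\subseteq\ker g$, both $\ker f/N$ and $\ker g/N$ are proper submodules of the hollow module $M/N$, properness following from $f,g\neq 0$. Hollowness forces their sum to be proper, so $K:=\ker f+\ker g$ is a proper submodule of $M$. Coretractability then yields a nonzero $h\in S$ with $Kh=0$. In particular $\ker f\subseteq\ker h$ and $\ker g\subseteq \ker h$, so $h\in\operatorname{Ann}_S(\ker f)\cap\operatorname{Ann}_S(\ker g)$. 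By semi-injectivity this intersection equals $fS\cap gS$, so $fS\cap gS\neq 0$.

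\emph{Main obstacle.} The key step is turning hollowness of $M/N$ into properness of $\ker f+\ker g$. This works because $N$ lies in both kernels and each kernel is proper, which is exactly why we restrict to $\operatorname{Ann}_S(N)$. After that, coretractability produces the nonzero common element, and semi-injectivity identifies it as lying in $fS\cap gS$.
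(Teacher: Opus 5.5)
Your proof is correct. The paper gives no proof of this lemma and only cites \cite{Lomp2007}, but your argument uses the same mechanism the paper runs in reverse in the ($\Leftarrow$) direction of Corollary~\ref{E(S) superdecomposable iff no hollow factors}: hollowness of $M/N$ gives $\ker f+\ker g\neq M$, coretractability gives a nonzero $h$ annihilating this sum, and semi-injectivity places $h$ in $\operatorname{Ann}_S(\ker f)\cap\operatorname{Ann}_S(\ker g)=fS\cap gS$.
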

\begin{corollary}\label{E(S) superdecomposable iff no hollow factors}
Let $M$ be a nonzero semi-injective coretractable module with endomorphism ring $S=\mathrm{End}_R(M)$. Then $E(S_S)$ is superdecomposable if and only if $M$ has no nonzero hollow factor modules.
\end{corollary}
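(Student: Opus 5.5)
The plan is to prove both implications by combining the discussion at the start of Section~\ref{sec:superdecomposable modules} with the Lemma taken from \cite[Proposition 3.1]{Lomp2007}. That discussion shows that, for the nonzero module $S_S$, the injective hull $E(S_S)$ is superdecomposable if and only if $S$ contains no nonzero uniform right ideal. So it suffices to show the following: $S$ has a nonzero uniform right ideal if and only if $M$ has a nonzero hollow factor module.

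\emph{Hollow factor gives a uniform right ideal.} Suppose $M/N$ is a nonzero hollow factor module, so $N<M$ is proper. The Lemma then says directly that $\operatorname{Ann}_S(N)$ is a nonzero uniform right ideal of $S$. Hence $E(S_S)$ is not superdecomposable.

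\emph{Uniform right ideal gives a hollow factor.} Suppose $U\subseteq S$ is a nonzero uniform right ideal. I would choose $0\neq f\in U$; then $fS\subseteq U$ is again uniform. The candidate hollow factor is $M/\ker f\cong Mf$, which is nonzero. Suppose $\ker f\subseteq A,B\subsetneq M$ with $A+B=M$; we need a contradiction.
\begin{enumerate}
\item By coretractability, $\operatorname{Hom}(M/A,M)\neq 0$ and $\operatorname{Hom}(M/B,M)\neq 0$. So there are nonzero $g,h\in S$ with $Ag=0$ and $Bh=0$. Both lie in $\operatorname{Ann}_S(\ker f)$.
\item By semi-injectivity, $\operatorname{Ann}_S(\ker f)=fS$. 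Hence $g,h\in fS$.
\item Since $fS$ is uniform, there is $0\neq k\in gS\cap hS$.
\item Because $Ag=0$, we get $Ak=0$, and likewise $Bk=0$. Therefore $Mk=(A+B)k=0$, so $k=0$, a contradiction.
\end{enumerate}
Thus $M/\ker f$ is a nonzero hollow factor module of $M$.

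The step needing the most care is the second direction. It depends on the characterization of semi-injectivity via $\operatorname{Ann}_S(\ker f)=fS$ stated earlier, and on using right-acting endomorphisms consistently: $gS$ consists of the maps $x\mapsto (xg)s$, so they still kill $A$. With that convention checked, the argument is routine.
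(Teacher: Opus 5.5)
Your proposal is correct and follows essentially the same route as the paper: you obtain the forward direction from the Lemma, and the converse by showing $M/\ker f$ is hollow using coretractability, $\operatorname{Ann}_S(\ker f)=fS$, and uniformity of $fS$. The only difference is cosmetic. You intersect the cyclic right ideals $gS$ and $hS$, whereas the paper intersects $\operatorname{Ann}_S(A)$ and $\operatorname{Ann}_S(B)$ directly via $\operatorname{Ann}_S(A)\cap\operatorname{Ann}_S(B)=\operatorname{Ann}_S(A+B)=0$.
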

\begin{proof}
($\Rightarrow$) If $M/N$ is a nonzero hollow factor, then $\operatorname{Ann}_S(N)$ is a nonzero uniform right ideal. So $E(S_S)$ is not superdecomposable by the equivalence given at the beginning of Section 1. 

($\Leftarrow$) Suppose that $M$ has no nonzero hollow factor modules and that $I$ is a nonzero uniform right ideal of $S$. Let $0 \neq f \in I$. Then the right ideal $fS \subseteq I$ is also uniform. Since $M$ is semi-injective, we have $\operatorname{Ann}_S (\ker f) = fS$. 

We claim that $M/\ker f$ is hollow. Otherwise, there exist proper submodules $A,B$ of $M$ both containing $\ker f$ such that $A+B = M$. Since $M$ is coretractable, the sets $\operatorname{Hom}_R (M/A,M)$ and $\operatorname{Hom}_R(M/B,M)$ are nonzero and hence $\operatorname{Ann}_S (A)$ and $\operatorname{Ann}_S (B)$ are nonzero right ideals contained in $\operatorname{Ann}_S (\ker f) = fS$. Since $\operatorname{Ann}_S (A) \cap \operatorname{Ann}_S (B) = \operatorname{Ann}_S (A+B) = \operatorname{Ann}_S (M) = 0$, this contradicts the uniformity of $fS$. Thus, $M/\ker f$ is a nonzero hollow factor module, contradicting the hypothesis. Hence $S$ has no nonzero uniform right ideals, so $E(S_S)$ is superdecomposable.
\end{proof}

The above result can also be stated for coalgebras. Let $C$ be a coalgebra over a field $\KK$ and $C^* = \operatorname{Hom}_\KK (C, \KK)$ be its dual algebra. $C$ is canonically a left $C^*$-module with the action
\[f\cdot c = \sum_{(c)} f(c_2)c_1\]
for all $f \in C^*$ and $c \in C$. From \cite[9.1]{Brzezinski-Wisbauer}, we know that $C$ is an injective cogenerator in the category of right $C$-comodules. In particular, $C$ is a self-injective, self-cogenerator $C^*$-module. Also from \cite[3.12]{Brzezinski-Wisbauer}, we know that $\operatorname{End}_{C^*}(C) \to C^*$ given by $f\mapsto \epsilon \circ f$ is an isomorphism of $\KK$-algebras. So, the results above apply and we have the following version of Corollary~\ref{E(S) superdecomposable iff no hollow factors} for coalgebras:
\begin{corollary}\label{cor:coalgebra-hollow}
Let $C$ be a nonzero coalgebra over $\KK$. Then $E(C^*_{C^*})$ is superdecomposable if and only if $C$ has no nonzero hollow factor modules as a left $C^*$-module, equivalently, no nonzero hollow factors as a right $C$-comodule. 
\end{corollary}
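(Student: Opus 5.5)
The corollary should follow from Corollary~\ref{E(S) superdecomposable iff no hollow factors} with $M={}_{C^*}C$, once we check its hypotheses and identify the endomorphism ring with $C^*$. After that, we add a short argument passing between $C^*$-submodules and subcomodules.

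First, $C\neq 0$ is a nonzero left $C^*$-module. By \cite[9.1]{Brzezinski-Wisbauer}, $C$ is an injective cogenerator in the category $\mathcal{M}^C$ of right $C$-comodules. This category is identified with $\sigma[{}_{C^*}C]$, the full subcategory of $C^*$-modules subgenerated by $C$, via rational modules. In particular, $C$ is self-injective: it is injective relative to its own submodules, since these are subcomodules. Self-injectivity gives semi-injectivity directly. Given a monomorphism $k:N\to C$ and $g:N\to C$ with $N$ a factor of $C$, the module $N$ lies in $\sigma[C]$, so $g$ extends along $k$ to some $h:C\to C$.

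Coretractability comes from the cogenerator property. For a proper submodule $N<C$, the factor $C/N$ is a nonzero comodule. Since $C$ cogenerates $\mathcal{M}^C$, there is a nonzero map $C/N\to C$, so $\operatorname{Hom}_{C^*}(C/N,C)\neq 0$.

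Next, by \cite[3.12]{Brzezinski-Wisbauer}, the map $\operatorname{End}_{C^*}(C)\to C^*$, $f\mapsto \epsilon\circ f$, is a $\KK$-algebra isomorphism for our right-acting convention. This must be checked against the paper's multiplication convention, $m(fg)=(mf)g$. If it turned out to be an anti-isomorphism, right ideals would become left ideals. So I would verify it directly: endomorphisms are exactly the maps $c\mapsto\sum\varphi(c_1)c_2$ for $\varphi\in C^*$, and composition in the stated order matches the convolution product. Therefore $S\cong C^*$ as rings, and $E(S_S)$ corresponds to $E(C^*_{C^*})$. Corollary~\ref{E(S) superdecomposable iff no hollow factors} then yields the first equivalence.

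For the comodule formulation, note that the $C^*$-submodules of $C$ coincide with the right subcomodules of $C$, since $\mathcal{M}^C$ is a full subcategory of ${}_{C^*}\mathrm{Mod}$ closed under submodules and factors. Hence the factor modules and their lattices of submodules agree, and hollowness means the same thing in both categories.

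The main obstacle is bookkeeping rather than substance. One point is checking that the isomorphism of \cite[3.12]{Brzezinski-Wisbauer} respects the side convention, so that right ideals of $S$ are right ideals of $C^*$. The other is justifying that self-injectivity within $\sigma[C]$ suffices for semi-injectivity, since only factors of $C$ occur.
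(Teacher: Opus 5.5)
Your proposal is correct and follows the paper's route exactly. You apply Corollary~\ref{E(S) superdecomposable iff no hollow factors} to $M={}_{C^*}C$, using \cite[9.1]{Brzezinski-Wisbauer} for self-injectivity (hence semi-injectivity) and for the self-cogenerator property (hence coretractability), and \cite[3.12]{Brzezinski-Wisbauer} to identify $\operatorname{End}_{C^*}(C)$ with $C^*$. You also check directly that $h_\varphi(c)=\sum\varphi(c_1)c_2$ satisfies $h_\varphi h_\psi=h_{\varphi\psi}$ under the convention $m(fg)=(mf)g$, and note that the $C^*$-submodules of $C$ are exactly its subcomodules; both points make explicit details that the paper leaves implicit.
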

\section{Path Coalgebras and Uniform Right Ideals}
Let $\KK$ be a field and $Q = (Q_0, Q_1, s, t)$ be a quiver with $Q_0 \ne \emptyset$ and source and target maps $s,t: Q_1 \to Q_0$. A path of length $n \ge 1$ is a list $p = a_1 a_2 \cdots a_n$ of arrows $a_i$ such that $t(a_i) = s(a_{i+1})$ for all $1 \leq i < n$. We write $|p|$ for the length of a path $p$. Elements $e \in Q_0$ are considered as paths of length zero. Denote the set of all paths by $\cP$ and extend the maps $s$ and $t$ from $Q_1$ to $\cP$ by defining $s(p) = s(a_1)$ and $t(p) = t(a_n)$. For a vertex $e \in Q_0$, we set $s(e) = e = t(e)$. 

Let $C = \KK Q$ be the linear span of all paths in $\cP$. The product $pq$ of two paths $p$ and $q$ is defined when $t(p) = s(q)$. The prefix order defined by
\[p \leq q \iff q = pu \text{ for some path } u \in \cP\]
is a partial order on $\cP$. Moreover, 
$C$ is a coalgebra with comultiplication
\[\Delta(p) = \sum_{uv=p}u \otimes v\]
for all $p \in \cP$ and counit $\epsilon(e) = 1$ for every vertex $e \in Q_0$ and $\epsilon(p) = 0$ for every $p \in \cP \setminus Q_0$. The dual space $R = C^* = \operatorname{Hom}_\KK (C, \KK)$ becomes a $\KK$-algebra by
\[(fg)(p) = \sum_{uv = p}f(u)g(v) \]
for all $f, g \in R$ and a path $p$. The identity element is the counit $\epsilon$ defined above. For each path $p \in \cP$ there exists a functional $\pi_p$ defined by $\pi_p (p) = 1$ and $\pi_p (q) = 0$ for $q \neq p$. Note that for $p \in \cP$ with $e = s(p)$ and $e' = t(p)$, one has $\pi_e \pi_p = \pi_p = \pi_p \pi_{e'}$. In particular, for every vertex $e$, the right ideal $\pi_e R$ consists of functions that are supported in paths starting at $e$. The algebra $R$ is called the \emph{complete path algebra} of $Q$. There is an embedding given by $p \mapsto \pi_p$ of the ordinary path algebra in $R$ as the subalgebra of finite linear combinations of paths. Moreover, $R$ is its completion with respect to the topology whose basic neighbourhoods of zero are the cofinite monomial ideals, see \cite{Iovanov}. 

For the  alphabet $\Sigma = \{x,y\}$, one could interpret the monoid of words $\Sigma^*$ in \cite{Lomp2007} as the set of paths that one could obtain in the following quiver:
\[\xymatrix{
  \bullet \ar@(ur,dr)[]^y \ar@(dl,ul)[]^x.
}\]
The complete path algebra of $Q$ is  isomorphic to the free power series ring  $R=\KK\langle\langle x,y\rangle\rangle$ in $x$ and $y$, which turns the free algebra $C=\KK\langle x,y\rangle$ into a left $R$-module by $$\sum_{z\in \Sigma^*} \lambda_z z \cdot w = \sum_{w=uv} \lambda_v u,$$ for any $w\in \Sigma^*$. Since $R$ has no nonzero uniform right ideals, $C$ has no nonzero hollow factor module by Corollary \ref{cor:coalgebra-hollow}. Hence, given a quiver $Q=(Q_0,Q_1,s,t)$ and its path coalgebra $C$, it is natural to ask when the dual algebra $C^*$ has no nonzero uniform right ideals. 

We now state and prove some preliminary results which will lead to the proof of the main theorem. Denote by $S_e = \{ p \in \cP : s(p) = e\}$, resp. $T_e = \{ p \in \cP : t(p) = e\}$, the paths starting resp. terminating at $e \in Q_0$. Note that $S_e \cap T_e$ consists of all closed paths starting and terminating at $e$. 
\begin{lemma}\label{Finite and Totally Ordered Path Sets}
Let $Q = (Q_0, Q_1, s, t)$ be a nonempty quiver, $\KK$ be a field, and $R$ be the dual of the path coalgebra of $Q$. Let $e \in Q_0$. 
\begin{enumerate}
\item If $S_e$ is finite, then $\pi_e R$ contains a nonzero minimal right ideal.
\item If $S_e$ is totally ordered with respect to the prefix order, then $\pi_e R$ contains a nonzero uniserial Noetherian right ideal.
\end{enumerate}
Analogous statements hold for $T_e$ and the left ideal $R\pi_e$ with the suffix order.
\end{lemma}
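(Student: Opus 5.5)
Both parts rest on one computation. For $p\in\cP$ and $g\in R$ we have $(\pi_p g)(q)=g(u)$ if $q=pu$ and $0$ otherwise. So $\pi_p R$ consists precisely of the functions supported on paths $\geq p$ in the prefix order, with all such functions attained. More generally, for $f\in\pi_e R$ and $g\in R$, the value $(fg)(q)=\sum_{uv=q}f(u)g(v)$ depends only on the values of $f$ at prefixes of $q$.

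\textbf{Part (1).} If $S_e$ is finite, it has a maximal element $p$ for the prefix order. We show that $\pi_pR$ is a minimal right ideal inside $\pi_eR$. Since nothing in $S_e$ lies strictly above $p$, every path $q$ with $q\ge p$ equals $p$. Hence $\pi_pR=\{\lambda\pi_p:\lambda\in\KK\}$ is one-dimensional and nonzero. A nonzero one-dimensional right ideal is minimal, and $\pi_p=\pi_e\pi_p\in\pi_eR$. We expect no difficulty here.

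\textbf{Part (2).} If $S_e$ is totally ordered, it is a chain $e=p_0<p_1<p_2<\cdots$, finite or infinite, where $p_{n+1}$ extends $p_n$ by one arrow. The case of finite $S_e$ is handled by part (1), since a simple module is uniserial and Noetherian. So assume $S_e$ is infinite and let $I_n=\{f\in\pi_eR : f(p_k)=0 \text{ for } k<n\}$. We claim that the right ideals of $I_0=\pi_eR$ are exactly $0$ and the $I_n$. This gives a descending chain $I_0\supsetneq I_1\supsetneq\cdots$ with $\bigcap_n I_n=0$. Such a module is uniserial, and it is Noetherian because every submodule is cyclic, being one of the $I_n$.

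To prove the claim, take $0\neq f\in\pi_eR$ and let $n$ be the least index with $f(p_n)\ne0$. We show $fR=I_n$. Write $p_n=p_kq_k$ for $k\le n$, where the $q_k$ are paths starting at $t(p_k)$. For $g\in R$ we get $(fg)(p_m)=\sum_{k\le m}f(p_k)g(q_{k,m})$, where $p_m=p_kq_{k,m}$. This is a triangular system in the values of $g$ along the chain of paths starting at $t(p_n)$, and its leading coefficient $f(p_n)$ is nonzero. So we can solve recursively for $g$ taking any prescribed values $(fg)(p_m)$ with $m\ge n$. This shows $fR\supseteq I_n$; the reverse inclusion is clear. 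Any nonzero right ideal $J\subseteq\pi_eR$ then equals $I_n$ for the minimal $n$ occurring among its elements.

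The main obstacle is the bookkeeping in this recursive solution. We must check that the suffixes $q_{k,m}$ range over the chain of paths from $t(p_n)$ in the right way. The relevant paths from $t(p_n)$ are the suffixes $p_n^{-1}p_m$, which are pairwise distinct, so each unknown $g(p_n^{-1}p_m)$ is determined at stage $m$. Values of $g$ at other paths play no role.

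\textbf{Left-hand version.} The statements for $T_e$ and $R\pi_e$ follow by the symmetric argument with the suffix order, or by applying the above to the opposite quiver: its complete path algebra is $R^{op}$, and this exchanges $S_e$ with $T_e$.
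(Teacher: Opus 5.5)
Your argument is correct and follows the paper's proof. In (1), a maximal path $p\in S_e$ gives the one-dimensional ideal $\pi_pR=\KK\pi_p$; in (2), the triangular recursion with pivot $f(p_n)$ gives $fR=\pi_{p_n}R=I_n$. The paper solves only $fg=\pi_{p_n}$, which yields the same conclusion.

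One correction to your bookkeeping remark: values of $g$ at paths off the chain from $t(p_n)$ do enter $(fg)(p_m)$, through the terms $f(p_k)g(q_{k,m})$ with $k>n$, because $q_{k,m}$ starts at $t(p_k)$, which may differ from $t(p_n)$. So you must fix those values beforehand, for instance $g=0$ there as the paper does. After that the recursion is well founded, since $|q_{k,m}|<m-n$.
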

\begin{proof}
(1) Consider first the case $S_e = \{e\}$. Then for any path $q$,
\[(\pi_e f)(q) = \sum_{uv=q} \pi_e(u)f(v) = \begin{cases}f(e) & \text{ if } q = e\\ 0 & \text{ else}\end{cases}\]
because if $q = ev$, then $s(q) = e$ and hence $q \in S_e = \{e\}$. Hence, $\pi_e f = \lambda \pi_e$ for $\lambda = f(e)$, and $\pi_e R = \pi_e\KK$ is one-dimensional and therefore minimal. In case $S_e$ is finite, choose a path $p \in S_e$ of maximal length. Then $S_{t(p)} = \{t(p)\}$ and $\pi_e R \supseteq \pi_e \pi_p R = \pi_p \KK$ is minimal. 

(2) If $S_e$ is finite, then the result follows from (1). Suppose that $S_e$ is an infinite set which is totally ordered with respect to the prefix order. Hence, for any $i \geq 0$, there exists precisely one path $p_i \in S_e$ of length $i$. Clearly, $p_0 = e$ and there exists a sequence $\{a_i\}_{i\geq 1} \in Q_{1}^{\mathbb{N}}$ such that $p_i = a_1\cdots a_i$, i.e. $S_e = \{e\} \cup \{p_i : i\geq 1\}$. Set
\[u_{i,i} := t(p_i), \quad u_{k,m} = a_{k+1}\cdots a_m, \quad \forall m > k.\]
Then for any $m \geq k$ one has $p_m = p_k u_{k,m}$. 

Let $0 \neq f \in \pi_e R$ and let $n$ be minimal with $f(p_{n})\ne 0$. Set $\lambda = f(p_{n})^{-1}$. Define $g \in R$ to vanish outside the set $\{u_{n,m}: m \ge n\}$, and on these paths by recursion on the length:
\[g(u_{n,n}) = \lambda, \qquad g(u_{n,m}) = -\lambda\sum_{k = n + 1}^m f(p_k)g(u_{k,m}) \quad (m > n).\]
We claim that $fg = \pi_{p_n}$, because
\[(fg)(p_n) = \sum_{uv=p_n} f(u)g(v) = f(p_n)g(u_{n,n}) = 1,\]
since $f(p_k) = 0$ for $k < n$ because of the minimality of $n$. Moreover, for any $m>n$, 
\[ (fg)(p_m) = \sum_{k=n}^{m} f(p_k)g(u_{k,m}) = \lambda^{-1}g(u_{n,m}) + \sum_{k=n+1}^m f(p_k)g(u_{k,m}) = 0. \]
It is also clear that $(fg)(p_k) = 0$ for $k < n$ by the minimality of $n$. Thus, $fg = \pi_{p_n}$ and  $\pi_{p_n}R \subseteq fR$. On the other hand, $f \in \pi_{p_n}R$ because $\pi_{p_n}R$ consists of functions that are supported on paths which have $p_n$ as a prefix and $f$ is such a function by the assumption. Hence, $fR = \pi_{p_n}R$. 

Let $J$ be a nonzero right subideal of $\pi_e R$. From the preceding paragraph, for each nonzero $f\in J$, there exists $i \ge 0$ such that $fR = \pi_{p_i}R$. Let $k = \min \{i : \text{there exists } f\in J \text{ with } f(p_i)\ne 0 \}$. Then $J = \pi_{p_k}R$ and each nonzero subideal of $\pi_e R$ is of the form $\pi_{p_i}R$ and hence $\pi_e R$ is uniserial and Noetherian since a strictly ascending chain of these ideals would give a strictly decreasing sequence of their indices. 
\end{proof}
\begin{corollary}
Let $Q = (Q_0, Q_1, s, t)$ be a quiver with convolution algebra $R$ as above. If $E(R_R)$ is superdecomposable, then for any $e \in Q_0$, $S_e$ is an infinite set containing at least two incomparable paths.
\end{corollary}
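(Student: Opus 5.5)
We argue by contraposition, using the equivalence established at the beginning of Section~1: $E(R_R)$ is superdecomposable if and only if $R_R$ contains no nonzero uniform submodule, that is, no nonzero uniform right ideal. So it suffices to show that if some vertex $e\in Q_0$ has $S_e$ finite, or has $S_e$ without two incomparable paths, then $R$ contains a nonzero uniform right ideal.

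\emph{Case 1: $S_e$ is finite.} By Lemma~\ref{Finite and Totally Ordered Path Sets}(1), $\pi_e R$ contains a nonzero minimal right ideal. A minimal right ideal is a simple module, and simple modules are trivially uniform. Hence $R$ has a nonzero uniform right ideal, and $E(R_R)$ is not superdecomposable.

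\emph{Case 2: $S_e$ contains no two incomparable paths.} Then $S_e$ is totally ordered with respect to the prefix order. By Lemma~\ref{Finite and Totally Ordered Path Sets}(2), $\pi_e R$ contains a nonzero uniserial right ideal $U$. Any two nonzero submodules of $U$ are comparable, so their intersection is the smaller one, which is nonzero. Thus $U$ is uniform, and again $E(R_R)$ is not superdecomposable.

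Both cases contradict the hypothesis, so every $S_e$ must be infinite and must contain two incomparable paths. There is no real obstacle here: the work was done in Lemma~\ref{Finite and Totally Ordered Path Sets}, and the only points to note are that simple and uniserial modules are uniform, and that "no two incomparable elements" means exactly "totally ordered".
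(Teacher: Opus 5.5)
Your proof is correct and follows exactly the route the paper intends. The corollary is read off from Lemma~\ref{Finite and Totally Ordered Path Sets}, using that minimal and uniserial right ideals are uniform, together with the equivalence at the start of Section~1; this is the same reasoning the paper gives in the $(2)\Rightarrow(1)$ step of the Main Theorem.
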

Recall that we call a vertex $e$ in $Q$ \emph{branching} if there are two distinct arrows whose source is $e$. 
\begin{lemma}\label{Access to a Branching Vertex}
Let $\KK$ be a field, $Q$ be a quiver, and $C$ and $R$ be its path coalgebra and convolution algebra, respectively. Suppose that $f$ is a nonzero element in $R$ and $p$ is a path of minimal length in the support of $f$ so that $f(\alpha) = 0$ for every path $\alpha$ such that $|\alpha| < |p|$. If $t(p)$ is a branching vertex, then the right ideal $I = fR$ is not uniform. 
\end{lemma}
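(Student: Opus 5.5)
Let $e=t(p)$ and let $a\neq b$ be two distinct arrows with $s(a)=s(b)=e$. The plan mimics Example~\ref{powerseries}: we show that the nonzero right ideals $f\pi_aR$ and $f\pi_bR$, both contained in $I=fR$, intersect trivially. Note that $f\pi_a$ and $f\pi_b$ make sense because $t(p)=s(a)=s(b)$.

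\textbf{Nonvanishing.} We compute $(f\pi_a)(pa)=\sum_{uv=pa}f(u)\pi_a(v)=f(p)\neq 0$, since the only factorization with $v=a$ is $u=p$. Similarly $(f\pi_b)(pb)=f(p)\ne0$, so both right ideals are nonzero.

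\textbf{Support description.} The key observation is that every $h\in f\pi_aR$ is supported on paths whose arrow at position $|p|+1$ is $a$, and likewise with $b$. Indeed, write $h=f\pi_a g$. For a path $q$ we have
\[h(q)=\sum_{uvw=q}f(u)\pi_a(v)g(w)=\sum_{q=uaw}f(u)g(w).\]
Any nonzero term requires $f(u)\ne0$, hence $|u|\ge|p|$ by the minimality assumption on $p$. Now suppose $q=uaw=u'bw'$ with $|u|=|u'|=|p|$. Comparing the arrow in position $|p|+1$ gives $a=b$, which is false. So we would like to conclude that a nonzero element in both ideals must be nonzero on some path $q$ with both properties.

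\textbf{Main obstacle.} The difficulty is that $|u|$ may exceed $|p|$. Then a path $q$ can have the form $uaw$ with $|u|>|p|$ and simultaneously $u'bw'$ with $|u'|\ge|p|$, so the support argument above does not separate the two ideals.

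\textbf{Fix: normalize $f$ first.} Because $fR$ uniform implies every nonzero subideal is uniform, it suffices to find two nonzero subideals of $fR$ with zero intersection. Replace $f$ by $f\pi_a$ and argue as follows. Let $p_1,\dots$ be the (possibly infinitely many) paths of length $|p|$ in the support of $f$. I would rather use a different pair of elements, $f\pi_a$ and $f\pi_b$, and test the intersection differently. Suppose $h=f\pi_ag=f\pi_bg'$ is nonzero, and pick a path $q$ of minimal length in the support of $h$. Write $q=uaw$ with $f(u)g(w)\neq0$, and choose such a factorization with $|u|$ minimal; likewise $q=u'bw'$. One would need to show that $g$ can be replaced by an element whose support forces $|u|=|p|$.

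A cleaner route is to left-multiply by $\pi_{p}$-truncations. That is, first replace $f$ by $f$ restricted to the single prefix class: since $\pi_eR$ only sees paths starting at $e$, use right multiplication to make $f$ ``invertible from the start''. Concretely, I would try to show $fR\supseteq \pi_{p}$-like elements by a recursive inverse, as in Lemma~\ref{Finite and Totally Ordered Path Sets}(2), so that $fR$ contains an element $f'$ with $f'(pa)=1$ whose support consists only of paths extending $pa$, and similarly an element $f''$ extending $pb$. Then $f'R$ and $f''R$ consist of functions supported on paths with prefix $pa$, respectively $pb$. These two prefix sets are disjoint, so $f'R\cap f''R=0$. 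Constructing such $f'$ by solving the triangular recursion is the step I expect to be hardest, since $f$ may have several minimal-length support paths $p'\neq p$ whose contributions must be cancelled.
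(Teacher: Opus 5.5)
There is a genuine gap: the proposal never proves that any two nonzero subideals of $fR$ intersect trivially. Your setup agrees with the paper: the right ideals $f\pi_aR$ and $f\pi_bR$, and the computation $(f\pi_a)(pa)=f(p)=(f\pi_b)(pb)$. You also correctly diagnose that the naive support claim fails, because factorizations $q=uaw$ with $|u|>|p|$ contribute. But your first fix is abandoned at ``one would need to show\ldots'', and the ``cleaner route'' rests on a claim that is false in general.

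That route needs some $0\neq f'\in fR$ supported on paths with prefix $pa$. Take the one-vertex quiver with loops $x,y$ (so $R\cong\KK\langle\langle x,y\rangle\rangle$), $f=\pi_x+\pi_y$ and $p=x$; the hypotheses of the lemma hold. For every $g\in R$ and every path $w$ one has $(fg)(xw)=g(w)=(fg)(yw)$. So if $fg$ vanishes on all paths beginning with $y$, then $g=0$. Hence $fR$ contains no nonzero element supported on paths with prefix $x$, let alone with prefix $pa$. The recursive inversion of Lemma~\ref{Finite and Totally Ordered Path Sets}(2) works only because $S_e$ is a chain there. Here the second minimal-length support path $y$ can never be cancelled by right multiplication.

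The missing idea is a length count that makes $|u|=|p|$ automatic, with no need to modify $g$. Suppose $0\neq f\pi_ag=f\pi_bh$. Let $r$ be a path of minimal length with $s(r)=t(a)$ and $g(r)\neq 0$, and let $r'$ be one of minimal length with $s(r')=t(b)$ and $h(r')\neq 0$. By symmetry assume $|r|\le|r'|$, and evaluate both sides at $par$.

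In $\sum_{\alpha a\gamma=par}f(\alpha)g(\gamma)$, terms with $|\alpha|<|p|$ vanish by minimality of $p$. Terms with $|\alpha|>|p|$ have $|\gamma|<|r|$ and vanish by minimality of $r$. So the value is $f(p)g(r)\neq 0$.

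In $\sum_{\alpha b\gamma=par}f(\alpha)h(\gamma)$, the case $|\alpha|=|p|$ is impossible, since the arrow after $p$ in $par$ is $a\neq b$. If $|\alpha|<|p|$ then $f(\alpha)=0$. If $|\alpha|>|p|$ then $|\gamma|<|r|\le|r'|$, so $h(\gamma)=0$. So the value is $0$, a contradiction. This is the paper's argument.

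Your first attempt also closes this way. Every path in the support of $f\pi_ag$ has length at least $|p|+1+|r|$, with equality attained at $par$, and likewise with $|r'|$ for $f\pi_bh$. Since the two functions coincide, $|r|=|r'|$. Now take $q$ of minimal length in the support of $h$. Every nonzero term $f(u)g(w)$ with $q=uaw$ then has $|u|=|p|$ exactly, and the same holds for the $b$-factorizations. So the arrow of $q$ in position $|p|+1$ would be both $a$ and $b$, which is impossible.
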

\begin{proof}
Let $v = t(p)$. By hypothesis, there are distinct arrows $a,b$ whose source is $v$. Consider the right ideals
\[U = f \pi_a  R, \qquad V = f \pi_b  R\]
contained in $I$, which are both nonzero since $(f \pi_a)(pa) = f(p) \neq 0$ and $(f \pi_b)(pb) = f(p)\neq 0$.

We claim that $U \cap V = 0$. Suppose that this is not true and that
\[0 \neq f \pi_a  g = f \pi_b  h\]
for some $g, h \in R$. Since $f \pi_a  g$ is nonzero, there exists a path $u$ for which
\[(f \pi_a  g)(u) = \sum_{u_1 au_2 = u}f(u_1)g(u_2) \neq 0.\]
So there exists a decomposition $u = u_1 ar$ with $f(u_1)g(r) \neq 0$ (otherwise the above sum would be zero); in particular, the set of paths $\rho$ with $s(\rho) = t(a)$ and $g(\rho)\neq 0$ is nonempty. Among all such paths $\rho$, choose $r$ of minimal length. Similarly, from $f \pi_b  h \neq 0$ we deduce that the set of paths $\sigma$ with $s(\sigma) = t(b)$ and $h(\sigma)\neq 0$ is nonempty, and we choose $s$ of minimal length among such paths. Without loss of generality (interchanging the pairs $(a,g)$ and $(b,h)$ if necessary), we may assume
\[|r| \leq |s|.\]

We evaluate $f \pi_a  g$ and $f \pi_b  h$ at $par$. Let
\[(f \pi_a  g)(par) = \sum_{\alpha a\gamma = par}f(\alpha)g(\gamma).\]
For every decomposition in the above sum, exactly one of the following holds:
\[|\alpha| < |p|, \qquad |\alpha| = |p|, \qquad |\alpha| > |p|.\]
In the first case $f(\alpha) = 0$, in the third $g(\gamma) = 0$, and in the second case we necessarily have $(\alpha,\gamma) = (p,r)$. Hence 
\[(f \pi_a  g)(par) = f(p)g(r) \neq 0.\]
On the other hand, when calculating $(f \pi_b   h)(par)$, then
\[(f \pi_b  h)(par) = \sum_{\alpha b\gamma = par}f(\alpha)h(\gamma).\]
If  $|\alpha| < |p|$, then $\alpha$ is a prefix of $p$ and $f(\alpha)=0$ by the minimality of $p$.
If  $|\alpha| > |p|$, then $pa$ is a prefix of $\alpha$. Thus, $\alpha b \gamma = par$ yields $|\gamma| < |r| \le |s|$ and hence $h(\gamma)=0$ by minimality of  $s$. The only remaining case where $|\alpha| = |p|$ is impossible because the arrow immediately following $p$ in $par$ is $a \neq b$. Therefore,
\[(f \pi_b  h)(par) = 0,\]
contradicting $f \pi_a  g = f \pi_b  h$. Therefore, $U\cap V = 0$. Hence,  $I$ contains two nonzero right ideals with zero intersection and it is not uniform.
\end{proof}
Recall that a vertex $v$ of a quiver has access to another vertex $v'$ if there is a path $p$ with $s(p) = v$ and $t(p)=v'$. Now we are ready to prove the main theorem:
\begin{theorem}\label{Main Theorem}
Let $\KK$ be a field, $Q$ a nonempty quiver, and $C$ and $R$ be its path coalgebra and convolution algebra, respectively. The following statements are equivalent:
\begin{enumerate}
\item Every vertex of $Q$ has access to a branching vertex.
\item $R$ has no nonzero uniform right ideals.
\item $E(R_R)$ is superdecomposable.
\item $C$ has no nonzero hollow factor modules as a left
$R$-module.
\end{enumerate}
\end{theorem}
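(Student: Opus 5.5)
The equivalences (2)$\Leftrightarrow$(3) and (3)$\Leftrightarrow$(4) are already available. The first follows from the remark at the beginning of Section~1: $E(M)$ is superdecomposable iff $M$ has no nonzero uniform submodules, applied to $M=R_R$. The second is Corollary~\ref{cor:coalgebra-hollow}, using $C^*=R$. So the work reduces to showing (1)$\Leftrightarrow$(2).

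For (1)$\Rightarrow$(2), let $I$ be a nonzero right ideal and pick $0\neq f\in I$. We will locate inside $fR$ an element to which Lemma~\ref{Access to a Branching Vertex} applies. Let $p$ be a path of minimal length in the support of $f$, and put $v=t(p)$. By (1) there is a path $q$ with $s(q)=v$ and $t(q)$ branching. Consider $f'=f\pi_q\in fR\subseteq I$. Then $(f\pi_q)(\alpha)=\sum_{\alpha=\beta q}f(\beta)$, so $f'$ vanishes on all paths of length $<|p|+|q|$, and $f'(pq)=f(p)\neq 0$. Hence $pq$ is a path of minimal length in the support of $f'$, and $t(pq)=t(q)$ is branching. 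Lemma~\ref{Access to a Branching Vertex} then shows $f'R$ is not uniform, so $I\supseteq f'R$ is not uniform either, since uniformity passes to nonzero submodules. The only point to check is that the minimal length of the support of $f'$ is exactly $|p|+|q|$. This holds because any $\beta$ with $f(\beta)\ne0$ has $|\beta|\ge|p|$.

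For (2)$\Rightarrow$(1), argue by contraposition. Suppose some vertex $e$ has no access to any branching vertex. Then every vertex reachable from $e$ emits at most one arrow, so $S_e$ is totally ordered under the prefix order. Indeed, two paths from $e$ must agree arrow by arrow, since at each step the current vertex is reachable from $e$ and has at most one outgoing arrow. Lemma~\ref{Finite and Totally Ordered Path Sets}(2) then gives a nonzero uniserial right ideal in $\pi_eR$, and a nonzero uniserial module is uniform. This contradicts (2).

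I expect no step to be a serious obstacle. The most delicate point is the verification in (1)$\Rightarrow$(2) that right multiplication by $\pi_q$ shifts the minimal support to $pq$ without cancellation. Cancellation cannot occur because $pq$ factors as $\beta q$ in only one way. Separately, the totally ordered claim in (2)$\Rightarrow$(1) needs a short induction on length.
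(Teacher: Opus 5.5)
Your proposal is correct and follows the paper's proof essentially step by step. Like the paper, you reduce to (1)$\Leftrightarrow$(2), prove (1)$\Rightarrow$(2) by passing to $f\pi_q$ so that Lemma~\ref{Access to a Branching Vertex} applies at $pq$, and prove (2)$\Rightarrow$(1) by contraposition, using that $S_e$ is totally ordered together with Lemma~\ref{Finite and Totally Ordered Path Sets}. Your explicit check that $(f\pi_q)(\alpha)=\sum_{\alpha=\beta q}f(\beta)$ involves no cancellation just spells out the paper's one-line justification.
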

\begin{proof}
$(1) \Rightarrow (2)$ Let $I$ be a nonzero right ideal of $R$. Take $0 \neq f \in I$ and let $p$ be a path of minimal length such that $f(p) \neq 0$. By assumption, there exists a path $q$ such that $s(q) = t(p)$ for which $t(q)$ emits at least two distinct arrows. The functional 
\[\overline{f} = f\pi_q \in I,\]
satisfies $\overline{f}(pq) = f(p) \ne 0$ and has the path $pq$ as a minimal length element in its support, since $p$ is minimal and any element in its support must have $q$ as a suffix. Since $t(pq)$ is a branching vertex, then $\overline{f}R$ is not uniform by Lemma~\ref{Access to a Branching Vertex} which, in turn, implies $I$ is not uniform.

(2) $\Rightarrow$ (1) Suppose that $Q$ has a vertex $e$ which does not have access to a branching vertex. Then the set $S_e$ is totally ordered by the prefix order, because every vertex reachable from $e$ emits at most one arrow. Since minimal and uniserial ideals are uniform, in each case, it follows from Lemma~\ref{Finite and Totally Ordered Path Sets} that $R$ contains a nonzero uniform right ideal.

The equivalence of (2) and (3) is established at the beginning of Section~\ref{sec:superdecomposable modules}, while the equivalence of (3) and (4) follows from Corollary~\ref{cor:coalgebra-hollow}.
\end{proof}

\begin{example}
Consider the quiver
\[
Q: \xymatrix{
1 \ar[r]^z & 2 \ar@(ul,ur)[]^x \ar@(dl,dr)[]_y
}
\qquad \qquad 
Q^{op}: \xymatrix{
1  & 2 \ar[l]_z \ar@(ul,ur)[]^x \ar@(dl,dr)[]_y
}
\]
and let $C$ be its path coalgebra over a field $\KK$ and let $R=C^*$ be the complete path algebra of $Q$. By Theorem \ref{Main Theorem},  $E(R_R)$ is superdecomposable. We can identify $R$ with the generalized upper triangular matrix ring
$$R \simeq \left(\begin{array}{cc} \KK & D \\ 0 & D\end{array}\right), \qquad \mbox{ with } D=\KK\langle\langle x,y\rangle\rangle.$$
Note that $R$ has a minimal left ideal. Hence $E({_RR})$ is not superdecomposable. Alternatively, one could consider $R^{op}$ as the complete path algebra of the quiver $Q^{op}$, obtained from $Q$ by reversing arrows, which does not satisfy   Theorem \ref{Main Theorem}.
\end{example}
\begin{example}
Consider the quiver
\[
Q: \xymatrix{
1 \ar[r] &
2 \ar[r] &
3 \ar[r] &
4.
}
\]
Then $Q$ does not satisfy Theorem \ref{Main Theorem}. Adding to each arrow in $Q$ an arrow in the opposite direction, yields
\[
\overline{Q}:\xymatrix{
1 \ar@/^/[r] &
2 \ar@/^/[l] \ar@/^/[r] &
3 \ar@/^/[l] \ar@/^/[r] &
4 \ar@/^/[l]
}
\]
which satisfies the condition of Theorem \ref{Main Theorem}. More generally, let  $Q=(Q_0,Q_1,s,t)$ be any nonempty quiver. Create a new quiver  $\overline{Q}=(\overline{Q}_0,\overline{Q}_1,\overline{s},\overline{t})$, by adding to $Q_1$ a copy of each arrow in reverse direction, i.e. set $\overline{Q}_0=Q_0$, $\overline{Q}_1 = Q_1 \cup Q_1^{op}$, with $Q_1^{op}=\{ a^{op} : a\in Q_1\}$, $\overline{s}(a^{op})=t(a)$ and $\overline{t}(a^{op})=s(a)$, for any $a\in Q_1$, while $\overline{s}_{\mid_{Q_1}}=s$ and $\overline{t}_{\mid_{Q_1}}=t$. If $Q$ is a non-empty, connected  quiver $Q$, then  $\overline{Q}$ satisfies Theorem \ref{Main Theorem}, provided $Q$ has at least one arrow and is not the quiver $\xymatrix{ 1 \ar[r] &2 }$.
\end{example}

\end{document}